\documentclass[11pt]{article}

\usepackage{amsmath,amssymb,amsthm,mathtools}
\usepackage{booktabs}
\usepackage{longtable}
\usepackage{geometry}
\usepackage{graphicx}
\usepackage{microtype}
\usepackage{xcolor}
\usepackage{hyperref}

\hypersetup{
  colorlinks=true,
  linkcolor=blue!45!black,
  citecolor=blue!45!black,
  urlcolor=blue!45!black,
  pdftitle={A power-diagram upper bound for twenty-six variable-radius disks in a square},
  pdfauthor={Ayush Agarwal}
}

\newtheorem{theorem}{Theorem}[section]
\newtheorem{lemma}[theorem]{Lemma}
\newtheorem{proposition}[theorem]{Proposition}

\theoremstyle{definition}

\theoremstyle{remark}

\title{A Power-Diagram Upper Bound for Twenty-Six\\
Variable-Radius Disks in a Square}
\author{Ayush Agarwal\\
\small Independent Researcher, Bangalore, India\\
\small \href{mailto:ayushag1002@gmail.com}{\texttt{ayushag1002@gmail.com}}}
\date{24 August 2026}

\begin{document}
\maketitle

\begin{abstract}
Let $S_{26}$ denote the supremum of the sum of the radii of 26
pairwise interior-disjoint closed disks of positive radius contained in the
unit square.
We prove
\[
  2.635983084917607783\ldots
  \le S_{26}\le
  \sqrt{\frac{2577}{2500}+\frac{11}{\sqrt3}}
  =2.716919756099889566\ldots .
\]
The lower bound is accompanied by an exact rational feasibility
certificate. For the upper bound, we assign the disks their clipped power
cells, convert cell area into a reciprocal exterior-angle cost, and combine
this local inequality with Euler side counts. The extremal case for the
resulting relaxation has four boundary transitions and a four-cell boundary
cycle; its continuous angle allocation is reduced to a concave one-variable
problem. All finite numerical inequalities are rigorously enclosed by a
reproducible certificate. The argument does not determine $S_{26}$ exactly,
but gives a nontrivial global upper bound for a benchmark whose best-known
packing is asymmetric.
\end{abstract}

\section{Introduction}
\label{sec:introduction}

Let $Q=[0,1]^2$, and let $S_n$ denote the supremum of
$\sum_{i=1}^n r_i$ over all families of $n$ closed disks
$D(z_i,r_i)\subseteq Q$, with $r_i>0$, having pairwise disjoint interiors.
Unlike the
classical congruent-circle problem, both the centers and the radii are
variables.  This apparently modest change removes the reduction to a
single separation parameter and makes even numerical global
optimization substantially more difficult.  The sum-of-radii problem
has recently appeared as an explicit benchmark for mathematical
discovery systems~\cite{novikov2025alphaevolve,georgiev2025exploration},
and direct nonlinear formulations have been studied with modern global
optimization solvers~\cite{berthold2026revisited,berthold2026outofthebox}.

The case $n=26$ has received particular computational attention.  The
current Packomania table reports an asymmetric feasible packing with
sum of radii
\[
  2.635983084918,
\]
rounded there to the displayed 12 decimal places, and lists it as a
best-known construction rather than a proved optimum
~\cite{specht2026packomania}. In this paper we use an exact rational packing
that agrees with that entry to its reported precision; its sum is
\[
  L=2.63598308491760778318656948544348\ldots .
\]
All containment and non-overlap inequalities for this lower-bound
configuration are checked in exact rational arithmetic.

A general upper bound follows immediately from area and
Cauchy--Schwarz.  Disjointness gives
$\pi\sum_i r_i^2\leq 1$, and hence
\begin{equation}
  S_n\leq \sqrt{\frac{n}{\pi}}.
  \label{eq:intro-area-bound}
\end{equation}
This is also the basic dual bound used in recent global-optimization
work on the variable-radius model~\cite{berthold2026outofthebox}.  For
$n=26$, it gives
\[
  S_{26}\leq \sqrt{26/\pi}
  =2.876813695875796\ldots,
\]
leaving a gap of about $0.24083$ above the best-known construction.

Our main result replaces this area bound by
\begin{equation}
  \boxed{
  L\leq S_{26}\leq
  U:=\sqrt{\frac{2577}{2500}+\frac{11}{\sqrt{3}}}
  =2.716919756099889566\ldots .}
  \label{eq:intro-main-bracket}
\end{equation}
Thus the remaining certified gap is
$U-L=0.08093667118228\ldots$, or about $3.0705\%$ of the
lower bound.  Relative to~\eqref{eq:intro-area-bound}, the fraction of
the previous upper--lower gap removed is
\[
  \frac{\sqrt{26/\pi}-U}{\sqrt{26/\pi}-L}
  =0.663926\ldots,
\]
so the new estimate closes approximately $66.4\%$ of that gap. To our
knowledge as of 24 August 2026, the right endpoint in
~\eqref{eq:intro-main-bracket} is a new certified global upper bound
specifically for $n=26$
~\cite{specht2026packomania,berthold2026revisited,berthold2026outofthebox}.
It does \emph{not} prove that the current record construction is
optimal: determining the exact value of $S_{26}$ remains open.

The proof converts metric packing information into a planar
combinatorial budget.  We intersect the square with the power diagram
of the disks, a weighted Voronoi construction whose cells are convex
polygons~\cite{aurenhammer1987power,aurenhammer1991voronoi}.  Each power
cell contains its generating disk.  A local circumscribed-polygon
inequality, in the tradition of Fejes T\'oth's extremal polygon
methods~\cite{fejesToth1964regular,fejesToth2023lagerungen}, then turns
the exterior angles of a cell into a lower bound for its area.  Euler's
formula supplies a global side budget, while marked right-angle
contributions from the boundary of the square sharpen that budget.
The exceptional topology has exactly four open-side transitions; its
four corner cells form a boundary cycle, and retaining the transition
angles rather than merely counting them is what yields the stated
constant.  In this sense, broken boundary symmetry is not an incidental
feature of the construction but the configuration that the upper-bound
argument must treat most carefully.

Most of the argument is symbolic and geometric.  A short list of
one-variable trigonometric inequalities is certified by
outward-rounded interval arithmetic, while the lower construction is
verified over the rationals.  Validated numerics is a standard mechanism
for turning such finite computations into rigorous enclosures
\cite{moore2009interval}; interval-based computer-assisted proofs also
have substantial precedent in finite circle packing
\cite{markot2021interval}.  The accompanying verification code records
the intervals, endpoint conventions, and slack in every machine-checked
comparison.

The remainder of the paper develops the power-cell area inequality,
derives the planar side and boundary-angle budgets, treats the four-,
five-, and at-least-six-transition cases, and concludes with the exact
and interval certificates.  The construction and proof are logically
independent: the former establishes the left endpoint of
\eqref{eq:intro-main-bracket}, while the latter applies to every packing
of $26$ disks in the unit square.

\section{An exact feasible packing}\label{sec:construction}

The accompanying file \texttt{packing26\_exact.csv} contains 26 triples
$(x_i,y_i,r_i)$ with 50 digits after the decimal point. Every displayed
decimal is interpreted as an exact rational number, rather than as a
floating-point approximation.

\begin{proposition}[Exact lower certificate]\label{prop:lower-certificate}
The 26 disks specified in \textnormal{\texttt{packing26\_exact.csv}} have positive radii,
lie in $[0,1]^2$, have pairwise disjoint interiors, and satisfy
\[
  \sum_{i=1}^{26}r_i
  =2.63598308491760778318656948544348173039667679824847.
\]
Consequently, $S_{26}$ is at least the displayed value.
\end{proposition}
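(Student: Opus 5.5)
The certificate is a finite computation in exact rational arithmetic, so the plan is to reduce every claim in Proposition~\ref{prop:lower-certificate} to sign conditions on polynomials with rational coefficients. We first parse each of the $78$ decimal strings in \texttt{packing26\_exact.csv} as an exact rational $p/10^{50}$, with $p\in\mathbb{Z}$. All subsequent steps manipulate these rationals exactly, for instance with Python's \texttt{fractions.Fraction} or arbitrary-precision integers after clearing the common denominator $10^{50}$. No floating-point value enters any comparison.

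The checks, in order, are as follows.
\begin{enumerate}
\item \emph{Positivity:} $r_i>0$ for each $i$.
\item \emph{Containment:} $D(z_i,r_i)\subseteq[0,1]^2$ is equivalent to the four linear inequalities
\[
  r_i\le x_i,\quad x_i\le 1-r_i,\quad r_i\le y_i,\quad y_i\le 1-r_i,
\]
since the square is convex and the disk's extreme points in the coordinate directions are $z_i\pm r_ie_1$ and $z_i\pm r_ie_2$.
\item \emph{Disjoint interiors:} for each of the $\binom{26}{2}=325$ pairs $i<j$, the interiors are disjoint iff $|z_i-z_j|\ge r_i+r_j$. Both sides are nonnegative, so this is equivalent to the polynomial inequality
\[
  (x_i-x_j)^2+(y_i-y_j)^2-(r_i+r_j)^2\ge 0,
\]
which avoids square roots entirely and is decided exactly over $\mathbb{Q}$.
\item \emph{Sum:} compute $\sum_i r_i$ exactly. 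Since each $r_i$ has at most $50$ decimal places, the sum is a terminating decimal with at most $50$ places. We compare it digit for digit with the displayed value, so the stated equality is exact rather than approximate.
\end{enumerate}
The final sentence then follows from the definition of $S_{26}$ as a supremum over feasible families, of which this is one.

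The main obstacle is that a record packing is typically \emph{tight}: many containment and non-overlap constraints are active, holding with equality in exact real arithmetic. A naive truncation of a floating-point optimum to $50$ digits would then violate some of them by about $10^{-50}$. The proof therefore depends on the file having been constructed so that every inequality holds exactly. One way is to shrink all radii by a tiny rational margin before truncating, absorbing the loss in the last digits of the sum. Another is to solve the active constraint system for rational coordinates that satisfy the tight contacts with equality.

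We expect that, after squaring, each tight contact appears either as an exact $0$ or as a positive residual of order $10^{-50}$ or smaller. The verification should report the minimum slack over all $4\cdot 26+325=429$ constraints. We would treat any negative value as a defect of the file, not as a rounding artifact, because exact arithmetic has no rounding. Reporting these slacks also makes the certificate independently reproducible.
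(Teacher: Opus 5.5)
Your proposal is correct and follows the paper's proof: you parse the decimals as exact rationals, check the $4\cdot26$ containment inequalities and the $325$ squared-distance inequalities $(x_i-x_j)^2+(y_i-y_j)^2-(r_i+r_j)^2\ge0$ exactly, and sum the radii as an exact fraction; your explicit check that $r_i>0$ is a small addition that the paper leaves implicit. One nit: the containment equivalence holds because $[0,1]^2$ is the intersection of four half-planes, and a half-plane contains the disk iff it contains the disk's extreme point in the direction of its outward normal. Convexity of the square alone is not the reason, since it only yields the rhombus spanned by $z_i\pm r_ie_1$ and $z_i\pm r_ie_2$.
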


\begin{proof}
For each $i$, the four containment inequalities are
\[
 r_i\le x_i,\qquad r_i\le1-x_i,\qquad
 r_i\le y_i,\qquad r_i\le1-y_i.
\]
For each pair $i<j$, non-overlap is the polynomial inequality
\[
 (x_i-x_j)^2+(y_i-y_j)^2-(r_i+r_j)^2\ge0.
\]
The script \texttt{verify\_exact.py} parses the decimal strings as
fractions and verifies all $4\cdot26+\binom{26}{2}=429$ inequalities with
integer arithmetic. It also sums the 26 radii as an exact fraction. Thus
the calculation involves neither rounding nor a feasibility tolerance.
\end{proof}

The construction agrees, to the precision reported there, with the
current $N=26$ entry in the Packomania table~\cite{specht2026packomania}.
That table reports 78 contacts and no nontrivial symmetry. The construction
is used only as a lower bound here; no assertion of its global optimality is
made.

\begin{figure}[t]
  \centering
  \includegraphics[width=0.62\linewidth]{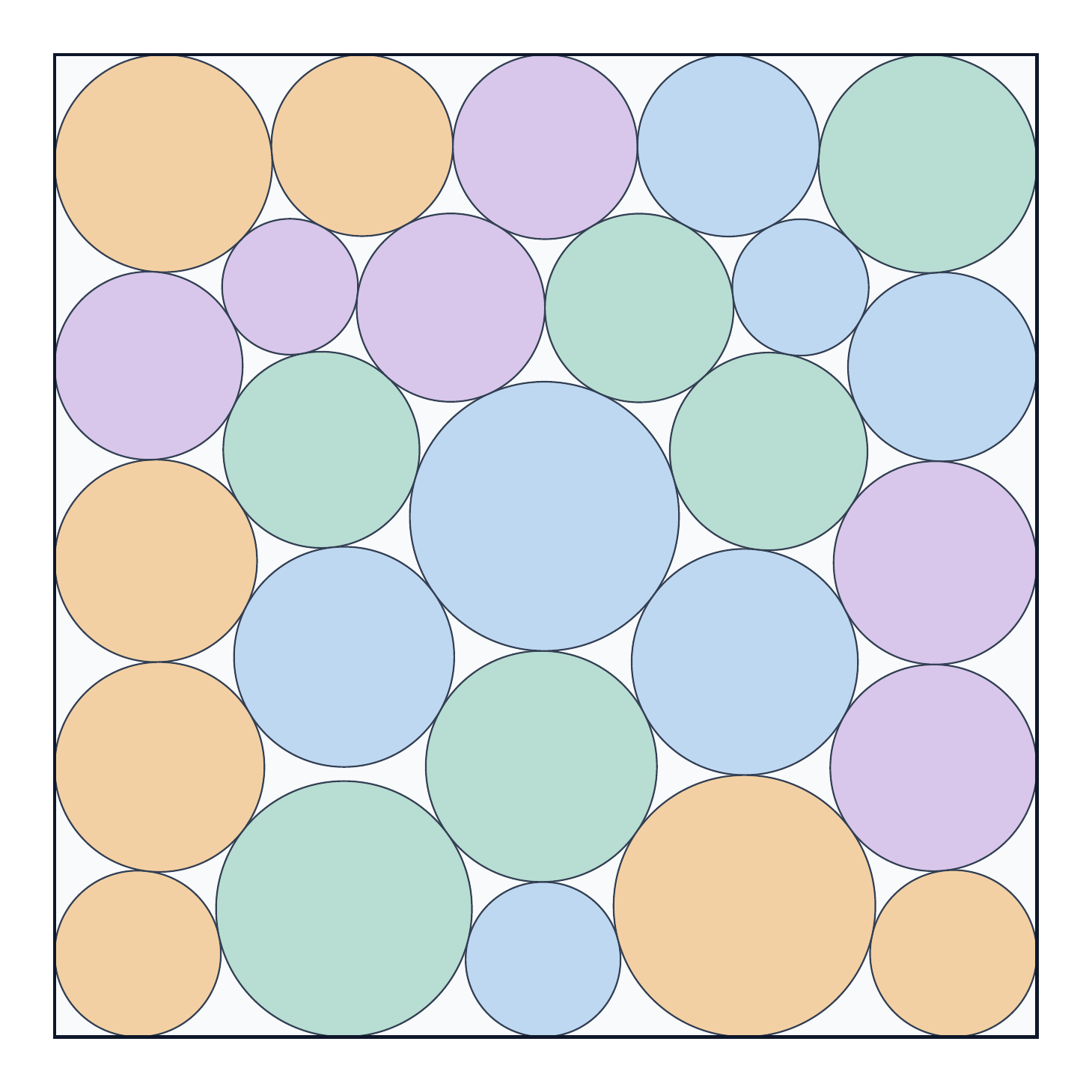}
  \caption{The exactly certified asymmetric 26-disk packing. The drawing is
  generated from the same decimal strings checked by
  \texttt{verify\_exact.py}.}
  \label{fig:packing}
\end{figure}

\section{The power-cell upper bound}\label{sec:upper-proof}

Throughout this section, \(Q=[0,1]^2\), and a packing means a family
\(\mathcal D=(D_i)_{i=1}^{26}\) of closed disks
\(D_i=B(z_i,r_i)\subset Q\), with \(r_i>0\), whose interiors are pairwise
disjoint. Put
\[
 S(\mathcal D)=\sum_{i=1}^{26}r_i,
 \qquad U^2=\frac{2577}{2500}+\frac{11}{\sqrt3}.
\]

\begin{theorem}\label{thm:global-upper-bound}
Every packing of \(26\) positive-radius disks in the unit square satisfies
\[
 S(\mathcal D)\le U=2.7169197560998895665\ldots .
\]
\end{theorem}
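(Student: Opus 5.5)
We will prove Theorem~\ref{thm:global-upper-bound} by the power-diagram route sketched in the introduction. Let $P_i$ be the power cell of $D_i$ intersected with $Q$. These cells are convex polygons that tile $Q$, and each contains its disk, so $P_i$ is a convex polygon containing a disk of radius $r_i$. The local inequality we aim for is the circumscribed-polygon bound. If $P_i$ has exterior angles $\theta_{i1},\dots,\theta_{ik}$ summing to $2\pi$, then
\[
 \operatorname{area}(P_i)\ \ge\ r_i^2\sum_{j}\tan(\theta_{ij}/2)=:r_i^2\,c_i .
\]
This follows because the polygon contains the circumscribed polygon around the inscribed disk with the same edge directions, and each vertex with exterior angle $\theta$ contributes a kite of area $r^2\tan(\theta/2)$. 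Summing and applying Cauchy--Schwarz gives
\[
 S^2=\Bigl(\sum_i r_i\Bigr)^2\le \Bigl(\sum_i c_i r_i^2\Bigr)\Bigl(\sum_i c_i^{-1}\Bigr)\le \sum_i \frac1{c_i}.
\]
The problem therefore reduces to bounding the reciprocal cost $\sum_i 1/c_i$ from above by $U^2$. By convexity of $\tan$, $c_i\ge k\tan(\pi/k)$ for a $k$-gon. Right-angle vertices coming from corners of $Q$ contribute exactly $\tan(\pi/4)=1$, and any constraint fixing some angles only increases the cost on the remaining ones.

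The second step is the combinatorial budget. We apply Euler's formula to the planar graph formed by the cell boundaries together with $\partial Q$. Assuming the degenerate vertices are perturbed to degree three, which only helps, the total number of sides satisfies $\sum_i k_i\le 6\cdot 26-6-(\text{boundary correction})$. We then refine this count by the number $b$ of cells meeting $\partial Q$ and the number $t$ of transitions along the boundary, meaning points where consecutive boundary cells change. Boundary cells carry the square's right angles or straight boundary edges, and these fix part of their exterior-angle budget. Since $x\mapsto 1/(k\tan(\pi/k))$ is increasing and concave in $k$ in the relevant sense, the maximum of $\sum 1/c_i$ under a fixed side total is attained with the sides spread as evenly as possible. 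Interior cells then behave like hexagons, each contributing $1/(6\tan(\pi/6))=1/(2\sqrt3)$. This is the source of the $11/\sqrt3=22\cdot\frac1{2\sqrt3}$ term, so the bound should come from $22$ hexagonal-type cells plus four boundary/corner cells contributing the rational part $2577/2500$.

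The third step is the case split on the number of boundary transitions $t$, with $t\ge4$ since the four corners must lie in cells. For $t\ge6$, the extra boundary vertices cost enough in the Euler budget that the crude bound using $k\tan(\pi/k)$ and concavity already gives less than $U^2$. We will check this through a finite list of numerical inequalities certified by interval arithmetic. The case $t=5$ is handled similarly, with slightly tighter slack. The main obstacle is the extremal case $t=4$, in which the four corner cells form a boundary cycle. Here counting sides is not enough: we must keep the actual transition angles, namely the angles at which interior cell edges meet $\partial Q$, as continuous variables. These angles appear in the costs of the corner cells and of their neighbours. We will parametrize by the angle allocation, use the symmetry and convexity of $\tan$ to show that at the optimum the free angles in each corner cell are equal. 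This should reduce the problem to maximizing a concave function of a single variable. Its maximum is evaluated, or enclosed by interval arithmetic, to show it is at most $2577/2500$ beyond the hexagonal contribution. That completes $S^2\le U^2$.
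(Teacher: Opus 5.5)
Your proposal has two genuine gaps: a missing case, and the $t\ge5$ cases, which do not close by the method you describe.

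\textbf{The multi-corner case is missing.} Your case split rests on ``$t\ge4$ since the four corners must lie in cells'', which does not follow. Nothing forces the four corners to have four \emph{distinct} owners. One cell may own two or three corners, e.g.\ a cell containing a whole side of $Q$. Then $T=3$ is possible, and at $T=4$ the boundary need not be a cycle of four corner cells, so your extremal analysis does not apply either.

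This case cannot be settled by side counting. At $T=3$ Euler gives $M=154-T=151$, so the balanced Jensen bound is $5g_5+21g_6\approx7.4386$. Even charging all $4+T=7$ right-angle marks through the affine bound $q_k(m)\le A+Bm+Ck$ leaves $26A+151B+7C\approx7.4178$. Both exceed $U^2\approx7.3817$.

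The paper uses a different estimate here. By convexity, a cell $P_*$ owning two corners contains the side or diagonal joining them together with a diameter of $D_*$, so $|P_*|\ge r_*$. This is linear rather than quadratic in $r_*$. Applying Cauchy--Schwarz only to the other $25$ cells (side sum at most $148$) gives $S\le r_*+\sqrt{K(1-r_*)}\le\sqrt K$ with $K=2g_5+23g_6\approx7.1901$.

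\textbf{Your handling of $t\ge5$ fails as stated.} The ``crude bound using $k\tan(\pi/k)$ and concavity'' does not settle $T=6$ or $T=7$. With $M=148$ or $147$ it gives $8g_5+18g_6\approx7.3984$ and $9g_5+17g_6\approx7.3850$, both above $U^2$. Adding only the four corner right angles still leaves $26A+148B+4C\approx7.3865$ at $T=6$.

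The missing idea is that at a simple boundary transition the two incident exterior angles sum to $\pi$, so one of them can be marked as at least $\pi/2$. With the marked cost $c_k(m)$ and the certified plane $q_k(m)\le A+Bm+Ck$, where $C<0$, the $4+T$ marks give $26A+(154-T)B+(4+T)C$. This drops below $U^2$ only from $T=6$ on.

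At $T=5$ it is still about $7.3850>U^2$, so there is no ``slightly tighter slack'' to appeal to. The paper needs three further ingredients:
\begin{enumerate}
\item a second plane, tight only at $(6,0),(5,1),(5,2)$;
\item the lemma that no cell carries four marks when $T=5$;
\item a counting argument: either some marked cell is non-tight (saving $\delta_*$), or the five tight marked pentagons force the $21$ unmarked cells to lose at least $2\delta_0>\delta_*$.
\end{enumerate}

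\textbf{Smaller points.} Your $t=4$ plan is in the spirit of the paper, but besides Jensen it needs:
\begin{enumerate}
\item the concavity of $1/\sum_j\tan y_j$ on the simplex, which is not just convexity of $\tan$;
\item a multiplier to decouple the constraint $\sum_i s_i=4\pi$;
\item a trade-off between the corner cells' side counts and the $22$ remaining cells, which the paper gets from an envelope linear in $\min(m,6)$ with slope between $g_7-g_6$ and $g_6-g_5$.
\end{enumerate}
Finally, genericity must be obtained by perturbing the packing itself and passing to a limit, not by perturbing diagram vertices.
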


\subsection{Power cells and the local area inequality}

For \(x\in\mathbb R^2\), let
\[
 \pi_i(x)=|x-z_i|^2-r_i^2
\]
be the power of \(x\) with respect to \(D_i\), and define
\[
 P_i=\{x\in Q:\pi_i(x)\le \pi_j(x)\text{ for every }j\}.
\]
The difference \(\pi_i-\pi_j\) is affine, so \(P_i\) is the
intersection of \(Q\) with \(25\) affine half-planes. Moreover,
\begin{equation}\label{eq:disk-in-power-cell}
 D_i\subset P_i.
\end{equation}
Indeed, if \(x\in D_i\), then \(\pi_i(x)\le0\), whereas
\[
 |x-z_j|\ge |z_i-z_j|-|x-z_i|\ge r_i+r_j-r_i=r_j
\]
implies \(\pi_j(x)\ge0\). Thus every \(P_i\) is two-dimensional.
After redundant inequalities and consecutive collinear sides are
removed, its number \(m_i\) of sides satisfies
\begin{equation}\label{eq:m-range}
 3\le m_i\le25+4=29.
\end{equation}
The cells have disjoint interiors and cover \(Q\).

We use exterior angles: if a convex \(m\)-gon has interior angle
\(\pi-\beta_j\) at its \(j\)-th vertex, then
\(0<\beta_j<\pi\) and \(\sum_j\beta_j=2\pi\).

\begin{lemma}[Containing-polygon inequality]\label{lem:polygon}
Let \(P\) be a convex \(m\)-gon of area \(a\) containing a disk of
radius \(r>0\). If \(\beta_1,\ldots,\beta_m\) are its exterior angles,
then
\begin{equation}\label{eq:polygon-area}
 a\ge r^2\sum_{j=1}^m\tan\frac{\beta_j}{2}.
\end{equation}
\end{lemma}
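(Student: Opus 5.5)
The plan is to reduce the inequality to the case where the disk is inscribed and then apply Lindelöf's classical comparison. Place the center of the disk at the origin and let \(h_j\) be the distance from the center to the line carrying the \(j\)-th side. Containment of the disk in \(P\) gives \(h_j\ge r\) for every \(j\).

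The first step is to decompose \(P\) into the \(m\) triangles spanned by the center and each side. The \(j\)-th side lies between vertices \(v_{j-1}\) and \(v_j\), where the interior angles are \(\pi-\beta_{j-1}\) and \(\pi-\beta_j\). Its length is
\[
 \ell_j=h_j\Bigl(\cot\theta_j^-+\cot\theta_j^+\Bigr),
\]
where \(\theta_j^\pm\) are the angles that the segments from the center to the two endpoints make with that side. This formula needs care with signs if the foot of the perpendicular falls outside the side, so the decomposition must be done with signed lengths. The cleaner route is the support-function formula
\[
 a=\tfrac12\sum_j h_j\ell_j,
\]
together with the observation that increasing any \(h_j\) pushes a side outward and enlarges the polygon.

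The key step is monotonicity. Fix the outward normals \(u_1,\dots,u_m\); these determine the exterior angles, since \(\beta_j\) is the angle between \(u_j\) and \(u_{j+1}\). Then compare \(P\) with the polygon \(P_0=\{x:\langle x,u_j\rangle\le r\ \forall j\}\). Because \(h_j\ge r\), we have \(P_0\subseteq P\). Moreover \(P_0\) is a genuine circumscribed polygon: all normals are distinct and consecutive angles lie in \((0,\pi)\), so every side of \(P_0\) has positive length and touches the disk. Hence \(\operatorname{area}(P)\ge\operatorname{area}(P_0)\).

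It remains to compute \(\operatorname{area}(P_0)\) exactly. For a polygon circumscribed about a disk of radius \(r\), each vertex with exterior angle \(\beta\) has two tangent segments of length \(r\tan(\beta/2)\). This holds because the kite formed by the center, the vertex, and the two tangency points has angle \((\pi-\beta)/2\) at the vertex, so the tangent length is \(r\cot\bigl((\pi-\beta)/2\bigr)=r\tan(\beta/2)\). The kite at that vertex has area \(r\cdot r\tan(\beta/2)\). Summing over the vertices gives
\[
 \operatorname{area}(P_0)=r^2\sum_j\tan(\beta_j/2),
\]
which proves~\eqref{eq:polygon-area}.

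The main obstacle is justifying that \(P_0\) has exactly the same exterior angles as \(P\), with no degenerate or vanished sides. This is where I would use that the \(u_j\) are in cyclic order with consecutive gaps \(\beta_j\in(0,\pi)\) summing to \(2\pi\). Under that condition the tangent lines at the points \(ru_j\) meet consecutively at the kite vertices described above. Each side of \(P_0\) then has length \(r\tan(\beta_{j-1}/2)+r\tan(\beta_j/2)>0\), so the consecutive intersections are indeed vertices, and \(P_0\) is an \(m\)-gon with the same normals.
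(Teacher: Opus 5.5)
Your proof is correct and takes essentially the same route as the paper: replace each supporting half-plane by the tangent half-plane \(\langle u_j,x\rangle\le r\) to get a circumscribed polygon \(P_0\subseteq P\) with the same exterior angles (using that consecutive gaps lie in \((0,\pi)\)), then compute \(\operatorname{area}(P_0)=r^2\sum_j\tan(\beta_j/2)\). Your kite decomposition is the same computation as the paper's inradius-times-semiperimeter formula, and the support-function/triangle digression in your second paragraph is not needed and can be dropped.
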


\begin{proof}
Translate the disk centre to the origin. Write the supporting
half-planes of \(P\) as \(\langle n_j,x\rangle\le d_j\), with unit
outward normals \(n_j\). Containment gives \(d_j\ge r\). Replacing
every half-plane by \(\langle n_j,x\rangle\le r\) produces a polygon
\(P^{\rm tan}\subset P\). Since consecutive collinear sides were
removed, consecutive normal directions differ by an angle in
\((0,\pi)\); hence every new tangent line supports an actual side and
\(P^{\rm tan}\) has the same exterior angles.

At a vertex of exterior angle \(\beta_j\), each tangent segment to the
two adjacent contact points has length \(r\tan(\beta_j/2)\). Thus the
semiperimeter is \(r\sum_j\tan(\beta_j/2)\). A tangential polygon has
area equal to its inradius times its semiperimeter, proving
\eqref{eq:polygon-area}.
\end{proof}

Suppose that \(k\) specified exterior angles are at least \(\pi/2\).
Define
\begin{equation}\label{eq:ck-definition}
c_k(m)=
\begin{cases}
 m\tan(\pi/m),&m\le4,\\[1mm]
 k+(m-k)\tan\!\left(\dfrac{(4-k)\pi}{4(m-k)}\right),
 &m\ge5,\quad0\le k\le3,
\end{cases}
\qquad q_k(m)=\frac1{c_k(m)}.
\end{equation}
If \(k=4\), the angle sum forces the four marked exterior angles to be
exactly \(\pi/2\) and every other exterior angle to be zero. After
collinear sides are suppressed, \(m=4\); the polygon is a rectangle,
and we set \(c_4(4)=4\), \(q_4(4)=1/4\).

\begin{lemma}[Marked-angle cost]\label{lem:marked-cost}
For every feasible pair \((m,k)\),
\[
 \sum_{j=1}^m\tan\frac{\beta_j}{2}\ge c_k(m).
\]
\end{lemma}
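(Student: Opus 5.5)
The plan is to minimise $F(\beta)=\sum_{j=1}^m\tan(\beta_j/2)$ over the constraint set and use convexity of $t\mapsto\tan(t/2)$ on $[0,\pi)$. The constraints are: $\beta_j\in(0,\pi)$, $\sum_j\beta_j=2\pi$, and $k$ specified angles at least $\pi/2$. Because the function is convex, Jensen's inequality will do most of the work once the marked angles are handled. The argument splits into the cases $m\le4$ and $m\ge5$.

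\emph{Case $m\le 4$.} The marked constraints can simply be dropped, since removing constraints can only lower the minimum. Jensen applied to all $m$ angles gives $F\ge m\tan(2\pi/(2m))=m\tan(\pi/m)=c_k(m)$. The case $k=4$ is the rectangle, where $F=4\tan(\pi/4)=4$ directly.

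\emph{Case $m\ge5$, $0\le k\le3$.} Write the marked angles as $\beta_j=\pi/2+\delta_j$ with $\delta_j\ge0$, and put $\Delta=\sum_j\delta_j$. The unmarked angles then sum to $(4-k)\pi/2-\Delta$. Jensen on the $m-k$ unmarked angles gives
\[
F\ge \sum_{j\le k}\tan\!\Big(\frac{\pi}{4}+\frac{\delta_j}{2}\Big)+(m-k)\tan\!\Big(\frac{(4-k)\pi/2-\Delta}{2(m-k)}\Big).
\]
For the marked terms I use the tangent line at $\pi/4$: convexity gives $\tan(\pi/4+\delta_j/2)\ge 1+\delta_j$, because the derivative of $\tan(t/2)$ at $t=\pi/2$ equals $\tfrac12\sec^2(\pi/4)=1$. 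For the unmarked block I use the tangent line at $\Delta=0$. Convexity in $\Delta$ bounds that block below by $(m-k)\tan\theta-\Delta\sec^2\theta/2$, where $\theta=(4-k)\pi/(4(m-k))$.

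Adding the two bounds gives
\[
F\ge k+(m-k)\tan\theta+\Delta\Big(1-\tfrac12\sec^2\theta\Big).
\]
It therefore suffices to show $\sec^2\theta\le2$, i.e.\ $\theta\le\pi/4$. This is equivalent to $(4-k)\le m-k$, i.e.\ $m\ge4$, which holds. Hence $F\ge k+(m-k)\tan\theta=c_k(m)$.

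The main obstacle is exactly this interplay: pushing mass from the unmarked angles into the marked ones must not decrease the total. The slope comparison $\tfrac12\sec^2\theta\le1$ settles it. Two feasibility details remain. First, the unmarked angles must stay below $\pi$, but dropping that constraint only weakens the minimisation, so it can be ignored. Second, the case $m-k\ge1$ is guaranteed because $m\ge5>k$.
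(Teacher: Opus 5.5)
Your proof is correct and is essentially the paper's argument. You apply Jensen to the unmarked angles, then observe that moving excess onto the marked angles cannot decrease the sum: the slope of $\tan$ is at least $\sec^2(\pi/4)=2$ on the marked side and at most $\sec^2\theta\le 2$ on the unmarked side, where $\theta\le\pi/4$ precisely because $m\ge4$. The only difference is mechanical: you use supporting tangent lines at $\pi/4$ and at $\Delta=0$, so the slope comparison is needed at a single point, whereas the paper also applies Jensen to the marked group and checks that the derivative in the total excess $E$ is nonnegative along the whole range.
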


\begin{proof}
Put \(y_j=\beta_j/2\), so \(\sum_jy_j=\pi\). For \(m\le4\), Jensen's
inequality gives \(\sum_j\tan y_j\ge m\tan(\pi/m)\).

Let \(m\ge5\) and \(1\le k\le3\); the case \(k=0\) is Jensen again.
Write the sum of the excesses of the marked variables above \(\pi/4\)
as \(E\ge0\). Jensen applied separately to the marked and unmarked
variables gives
\[
 \sum_j\tan y_j\ge
 k\tan\!\left(\frac\pi4+\frac Ek\right)
 +(m-k)\tan\!\left(
 \frac{(4-k)\pi}{4(m-k)}-\frac E{m-k}\right).
\]
The first mean is at least \(\pi/4\), while the second is at most
\((4-k)\pi/[4(m-k)]\le\pi/4\). The derivative in \(E\) is therefore
nonnegative, so the minimum occurs at \(E=0\). This gives
\eqref{eq:ck-definition}; the case \(k=4\) was handled above.
\end{proof}

Let \(a_i=|P_i|\), and let \(k_i\) be the number of marked angles of
\(P_i\). The preceding lemmas and Cauchy--Schwarz give
\begin{equation}\label{eq:master-cauchy}
 \left(\sum_i r_i\right)^2
 \le\left(\sum_i a_i\right)\left(\sum_iq_{k_i}(m_i)\right)
 =\sum_iq_{k_i}(m_i).
\end{equation}

\subsection{Generic topology and boundary runs}

We first prove the theorem for a generic clipped power diagram. Here
\emph{generic} means:
\begin{enumerate}
\item every interior power vertex is incident with exactly three cells;
\item every change of owner on the relative interior of a side of \(Q\)
is a simple two-cell transition: one power edge meets the side
transversely, and no third cell has equal power there;
\item no transition occurs at a corner of \(Q\), every corner has a
unique owner, and no boundary run has zero length;
\item redundant edges and degree-two collinear vertices are suppressed.
\end{enumerate}
Let \(T\) be the number of transitions on the open sides of \(Q\), and
let \(M=\sum_i m_i\).

\begin{lemma}[Euler side budget]\label{lem:euler-budget}
For a generic diagram of \(n\) nonempty cells,
\begin{equation}\label{eq:euler-budget}
 M=6n-2-T.
\end{equation}
Thus for \(n=26\), \(M=154-T\).
\end{lemma}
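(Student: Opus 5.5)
We will prove \eqref{eq:euler-budget} by applying Euler's formula to the planar graph $G$ formed by the union of the boundaries of the clipped cells $P_i$, which includes $\partial Q$. Under the genericity assumptions (1)--(4), the vertices of $G$ are of three kinds:
\begin{itemize}
\item the $V_I$ interior power vertices, each of degree $3$ by (1);
\item the $T$ transition points on the open sides of $Q$, each of degree $3$ (two boundary edges and one transverse power edge) by (2);
\item the four corners of $Q$, each of degree $2$ by (3).
\end{itemize}
The bounded faces of $G$ are exactly the $n$ cells, so $F=n+1$ once the outer face is counted.

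First we check that $G$ is connected. If $T=0$, one convex cell contains all of $\partial Q$, so it equals $Q$ and $n=1$. In that case the formula reads $4=6-2-0$ and holds. If $T\ge1$, each cell boundary is a closed polygon. The adjacency graph of the cells, which share edges, is connected because $Q$ is connected and the cells cover it. Hence the union of the cell boundaries is connected. Each bounded face is a convex polygon, so Euler's formula $V-E+F=2$ applies.

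Next we count. Handshaking gives $2E=3V_I+3T+8$. Euler's formula gives $V_I+T+4-E+n+1=2$, that is, $E=V_I+T+n+3$. Eliminating $V_I=(2E-3T-8)/3$ yields $3E=2E+3n+1$, so $E=3n+1$, independently of $T$.

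To relate $E$ to $M$, note the following:
\begin{itemize}
\item Every edge of $G$ is a genuine side of each cell it bounds. Redundant edges and collinear degree-two vertices were suppressed by (4).
\item A corner of $Q$ is a true vertex of its owning cell, with a right angle, so boundary runs are correctly split there.
\item Each interior edge borders two cells.
\item Each of the $T+4$ edges on the boundary cycle of $Q$ borders exactly one cell.
\end{itemize}
Hence
\[
 M=2E-(T+4)=6n+2-T-4=6n-2-T,
\]
and $n=26$ gives $M=154-T$.

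The only delicate point is the identification of graph edges with polygon sides, i.e.\ checking that no graph vertex is a straight-angle vertex of a cell and that every side of every cell is a single edge of $G$. This is exactly where genericity is used. Transitions are transverse and not at corners, which is (2)--(3). No interior vertex has degree greater than three, which is (1), so no two sides of a cell merge. A degree-three vertex cannot be straight in all three incident cells, but it could be straight in one. We would handle that case by appealing to (4): the vertex is then kept as a vertex of $G$ while it is not a vertex of that cell. This requires checking that such a vertex is either excluded by genericity or counted consistently. We expect this check to be the main obstacle, and would resolve it by noting that three distinct power-bisector lines through a point cannot make a straight angle within a single convex cell unless two of the lines coincide.
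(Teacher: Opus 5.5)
Your proof is correct and is essentially the paper's argument: Euler's formula with $I+T+4$ vertices and $n+1$ faces, the degree count $2E=3I+3T+8$, elimination to $E=3n+1$, and $M=2E-(T+4)$. Your extra checks are sound: connectivity, the trivial case $T=0$, and the straight-angle issue, which the paper silently absorbs into genericity condition (4). Your proposed resolution of the last one in fact shows the case cannot occur: a straight angle of cell $i$ at a vertex shared with cells $j,k$ would force $\{\pi_i=\pi_j\}$ and $\{\pi_i=\pi_k\}$, and hence also $\{\pi_j=\pi_k\}$, to be the same line, which contradicts the $j|k$ edge leaving that vertex into the open half-plane.
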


\begin{proof}
Let \(I\) be the number of interior vertices and \(E\) the number of
edges, including boundary edges. The connected planar graph has
\(I+T+4\) vertices and \(n+1\) faces, so Euler gives
\[
 E=I+T+n+3.
\]
Interior and open-side transition vertices have degree three, while
the four square corners have degree two. Hence
\[
 2E=3I+3T+8.
\]
Eliminating \(I\) gives \(E=3n+1\). There are \(T+4\) boundary edges.
An interior edge is counted twice in \(M\), and a boundary edge once;
therefore \(M=2E-(T+4)=6n-2-T\).
\end{proof}

\begin{lemma}[Boundary cells and runs]\label{lem:boundary-runs}
At least three distinct cells meet \(\partial Q\). If \(T>0\), the
transition points divide the boundary circle into exactly \(T\) open
runs, each with a constant owner. Consequently there are at most \(T\)
boundary cells, and the number of transition endpoints incident with
any fixed boundary cell is even.
\end{lemma}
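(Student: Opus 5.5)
We prove the four assertions of Lemma~\ref{lem:boundary-runs} in order. Each follows from convexity of the cells together with the genericity assumptions (1)--(4).

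\emph{At least three boundary cells.} Every vertex of a cell lies either in the interior of \(Q\) or on \(\partial Q\). Suppose some corner \(c\) of \(Q\) is owned by \(P_i\), which is unique by (3).

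If only one cell met \(\partial Q\), that cell would contain \(\partial Q\). Being convex, it would then contain all of \(Q\), contradicting \(n=26\) nonempty cells with disjoint interiors.

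Suppose instead that exactly two cells \(P_i,P_j\) meet \(\partial Q\). Their union covers \(\partial Q\), and each is convex, so each meets \(\partial Q\) in a single connected arc. A third cell \(P_k\) is two-dimensional by \eqref{eq:disk-in-power-cell}, so it has an interior point \(x\in \operatorname{int}Q\). Any line through \(x\) meets \(\partial Q\) in two points, and the segment between them passes through \(x\).
\begin{itemize}
\item If both endpoints lie in the same cell, say \(P_i\), then convexity of \(P_i\) puts \(x\in P_i\), which is impossible.
\item Otherwise every chord through \(x\) has one endpoint in \(P_i\) and one in \(P_j\). Since \(\partial Q\setminus\{\text{transition points}\}\) is covered by two arcs, there are exactly two transition points \(t_1,t_2\). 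The chord through \(x\) with one endpoint at \(t_1\) ends in the interior of an arc, so \(x\) lies on a segment in \(P_i\cup P_j\).
\end{itemize}
The cleaner route is this. The separating set \(P_i\cap P_j\) lies on the radical line \(\pi_i=\pi_j\), and both \(P_i\) and \(P_j\) lie in the half-planes \(\pi_i\le\pi_j\) and \(\pi_j\le\pi_i\) intersected with \(Q\). Hence \(P_i\) contains the convex hull of its boundary arc, namely the whole region of \(Q\) on its side of the chord \(t_1t_2\). The same holds for \(P_j\), so \(P_i\cup P_j=Q\), leaving no room for a third cell. This contradiction gives at least three boundary cells.

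\emph{Runs.} The set of transition points is finite, with exactly \(T\) points by (2), and no point is at a corner. Removing \(T>0\) points from the topological circle \(\partial Q\) leaves exactly \(T\) open arcs. On each arc the owner is locally constant, since ownership changes only at transitions, each of which is simple by (2). Corners have unique owners by (3), so they never interrupt a run. An arc is connected, so its owner is constant.

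\emph{Counting.} Each boundary cell owns at least one run, so there are at most \(T\) boundary cells. This step also shows \(T\ge3\) whenever \(T>0\), and \(T=0\) is impossible since at least three cells meet \(\partial Q\).

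\emph{Parity.} Fix a boundary cell \(P_i\). By convexity, \(P_i\cap\partial Q\) is a disjoint union of closed runs. In fact, a convex set meets \(\partial Q\) in a connected arc unless it separates \(Q\). Either way, the closure of each run owned by \(P_i\) contributes exactly two endpoint incidences. Two runs of \(P_i\) cannot be adjacent at a transition, because transitions are changes of owner. Hence the number of transition endpoints incident with \(P_i\) is twice the number of its runs, which is even.

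The main obstacle is the first step: excluding the two-cell case rigorously. The argument above via convex hulls of boundary arcs and the radical line is the approach I would use.
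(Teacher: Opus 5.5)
Your outline matches the paper's proof. A single boundary cell would contain \(\partial Q\) and hence \(Q\). Two boundary cells would each contain \(Q\) intersected with a closed half-plane, and so would cover \(Q\). Runs come from deleting \(T\) points from a circle. Parity comes from two endpoints per run; your observation that two runs of one cell cannot share an endpoint is a worthwhile addition, since the lemma is later used to count distinct vertices.

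\textbf{The gap.} It lies in the two-cell case, which you rightly flag as the crux. Your ``cleaner route'' rests on the sentence ``each is convex, so each meets \(\partial Q\) in a single connected arc.'' From that you get exactly two transition points \(t_1,t_2\). Convexity does not give this: the horizontal strip \(\{1/3\le y\le 2/3\}\cap Q\) is convex and meets \(\partial Q\) in two disjoint segments. Without the claim, neither the chord \(t_1t_2\) nor the identification of the convex hull of ``its boundary arc'' with the part of \(Q\) on one side of that chord is justified. The claim happens to be true here, but it needs its own argument. One option is to show that alternating boundary points of \(P_i\) and \(P_j\) would force overlapping interiors. The abandoned bulleted route relies on the same unproved claim and stops short of a contradiction, so it should simply be deleted.

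\textbf{The repair.} It uses the ingredient you wrote down but never actually used. Put \(H_i=\{\pi_i\le\pi_j\}\supset P_i\) and \(H_j=\{\pi_j\le\pi_i\}\supset P_j\).
\begin{itemize}
\item A point of \(\partial Q\) in the open half-plane \(\{\pi_i<\pi_j\}\) is not in \(P_j\). Since only \(P_i\) and \(P_j\) meet \(\partial Q\), it lies in \(P_i\).
\item Because \(P_i\) is closed, the endpoints of the chord \(\{\pi_i=\pi_j\}\cap Q\) also lie in \(P_i\).
\item The polygon \(Q\cap H_i\) is the convex hull of these boundary points and chord endpoints, so \(Q\cap H_i\subset P_i\).
\item Likewise \(Q\cap H_j\subset P_j\), hence \(P_i\cup P_j=Q\), leaving no room for a third full-dimensional cell.
\end{itemize}
This needs neither connectedness of the boundary pieces nor any count of transitions. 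It is exactly the paper's argument, with the radical line serving as the weakly separating line.
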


\begin{proof}
Deleting \(T\) points from the circle \(\partial Q\) gives \(T\) runs.
Each run has two endpoints, so a cell is incident with twice its number
of runs.

One boundary cell would contain all four corners and hence their convex
hull \(Q\), leaving no room for another full-dimensional cell. Suppose
there were exactly two boundary cells \(A,B\). Their disjoint interiors
admit a weakly separating line \(L\), with \(A\subset H_A\) and
\(B\subset H_B\). Every point of \(\partial Q\) strictly in \(H_A\)
belongs to \(A\); by closure, so do the endpoints of \(L\cap Q\).
The polygon \(Q\cap H_A\) is the convex hull of these boundary points
and chord endpoints, hence lies in \(A\). Similarly
\(Q\cap H_B\subset B\). Thus \(A\cup B=Q\), again excluding the
remaining full-dimensional cells.
\end{proof}

At a simple boundary transition the two incident interior angles sum
to \(\pi\), so the two exterior angles also sum to \(\pi\). At least
one is therefore at least \(\pi/2\). A square corner contributes an
exterior angle exactly \(\pi/2\) to its unique owner.

\subsection{Concavity and the finite coefficient certificate}

Put
\begin{equation}\label{eq:gAB}
 g_m=\frac1{m\tan(\pi/m)},\qquad
 A=6g_5-5g_6,\qquad B=g_6-g_5,\qquad
 C=\frac{1/4-2g_5+g_6}{4}.
\end{equation}

\begin{lemma}[Concavity of \(g_m\)]\label{lem:g-concavity}
The function \(g(x)=1/[x\tan(\pi/x)]\) is strictly increasing and
strictly concave for \(x\ge3\). Consequently
\begin{equation}\label{eq:g-affine}
 g_m\le A+Bm\qquad(m\ge3).
\end{equation}
For a fixed number of integer side counts \(m_j\ge3\) and an upper
bound on their sum, \(\sum_jg_{m_j}\) is maximized by exhausting the
side budget and making the \(m_j\)'s differ by at most one.
\end{lemma}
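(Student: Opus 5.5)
The plan is to substitute $t=\pi/x\in(0,\pi/3]$. This turns the claim into a statement about $h(t)=t\cot t$, since $g(x)=\frac{1}{\pi}\,t\cot t=\frac{1}{\pi}h(t)$.

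\textbf{Monotonicity.} For $0<t<\pi/2$,
\[
h'(t)=\cot t-t\csc^2 t=\frac{\sin t\cos t-t}{\sin^2 t}=\frac{\tfrac12\sin 2t-t}{\sin^2 t}<0 .
\]
Because $dt/dx=-\pi/x^2<0$, it follows that $g$ is strictly increasing in $x$.

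\textbf{Concavity.} Differentiating twice,
\[
g''(x)=\frac1\pi\Bigl(h''(t)\,t_x^2+h'(t)\,t_{xx}\Bigr),\qquad t_x=-\frac{t^2}{\pi},\quad t_{xx}=\frac{2t^3}{\pi^2}.
\]
Hence $\operatorname{sign} g''=\operatorname{sign}\bigl(t\,h''(t)+2h'(t)\bigr)$. This reduces to showing $\varphi(t):=t h''(t)+2h'(t)<0$ on $(0,\pi/3]$, which is the same as saying that $t^2h'(t)$ is strictly decreasing, since $(t^2h')'=t\varphi$.

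We have $t^2h'(t)=t^2\cot t-t^3\csc^2 t$. The power series is $t\cot t=1-\sum_{k\ge1}\frac{2^{2k}|B_{2k}|}{(2k)!}t^{2k}$, so $h'(t)=-\sum_{k\ge1} c_k\,t^{2k-1}$ with all $c_k>0$, and this series converges for $|t|<\pi$. Therefore $t^2h'(t)=-\sum c_k t^{2k+1}$ is a negative power series with strictly decreasing derivative on $(0,\pi)$, and $\varphi<0$ follows. This is the main obstacle, because a direct trigonometric manipulation of $\varphi$ is messy. The series route bypasses it, and if needed I would back it up with a short interval check on $[0,\pi/3]$.

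\textbf{Affine bound.} The coefficients $A=6g_5-5g_6$ and $B=g_6-g_5$ make $A+Bm$ the chord (secant line) through $(5,g_5)$ and $(6,g_6)$. For a strictly concave function, the graph lies below the extension of any chord outside the chord's interval. At integers $m\le5$ or $m\ge6$ this gives $g_m\le A+Bm$, with equality exactly at $m=5,6$. This covers all integers $m\ge3$.

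\textbf{Discrete maximization.} Suppose $m_j\ge3$ are integers with $\sum m_j\le M$. Since $g$ is increasing, raising any $m_j$ does not decrease the sum, so the budget should be exhausted. If two counts satisfy $m_i\ge m_j+2$, replace them by $m_i-1$ and $m_j+1$. Concavity gives
\[
g_{m_i}-g_{m_i-1}\le g_{m_j+1}-g_{m_j},
\]
strictly, so this move strictly increases the sum. Repeating the move terminates, because $\sum m_j^2$ strictly decreases at each step. The process ends at a balanced configuration, in which all counts differ by at most one, and this configuration is therefore a maximizer.
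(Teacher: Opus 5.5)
Your proof is correct, and its skeleton matches the paper's: the same substitution $u=\pi/x$, the same monotonicity criterion $u>\sin u\cos u$, the same chord through $(5,g_5)$ and $(6,g_6)$, and the same smoothing step.

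The only real divergence is in the core concavity inequality, and even there you reduce to the same quantity. The paper's function $u^2(u\csc^2u-\cot u)$ is exactly $-t^2h'(t)$ in your notation, and both arguments need it to increase in $t$.

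\begin{itemize}
\item The paper differentiates it explicitly, getting $\frac{2u}{\sin^2u}\bigl(2u-u^2\cot u-\sin u\cos u\bigr)$. It then bounds the bracket below by $0$ using $\tan u>u$ and $\sin u\cos u<u$. So the direct trigonometric route you expected to be messy is one factorization plus two elementary inequalities, and no interval backup is needed.
\item You instead use the classical expansion $t\cot t=1-\sum_{k\ge1}\frac{2^{2k}|B_{2k}|}{(2k)!}t^{2k}$ on $|t|<\pi$. Its negative coefficients make $t^2h'(t)$ visibly decreasing. This is quicker and more conceptual, since the one sign pattern gives both monotonicity and concavity. The cost is importing a nontrivial classical fact: the alternating signs of the Bernoulli numbers, or equivalently $\zeta(2k)>0$ via the partial-fraction expansion of $\cot$. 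The paper stays fully elementary.
\end{itemize}

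Two small points to tidy up.
\begin{itemize}
\item You say $t^2h'(t)$ has a \emph{strictly decreasing derivative}. What you actually need is that its derivative $-\sum_k(2k+1)c_kt^{2k}$ is \emph{negative} on $(0,\pi)$. That is immediate from your series, but it is what should be stated.
\item In the balancing step, concluding that the terminal balanced configuration is a maximizer uses the fact that, for a fixed number of counts and a fixed sum, the balanced multiset is unique. This deserves one sentence; the paper also leaves it implicit.
\end{itemize}
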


\begin{proof}
Set \(u=\pi/x\). Then \(g(x)=u\cot u/\pi\), and \(g'(x)>0\) is
equivalent to \(u-\sin u\cos u>0\). Writing
\[
 h(u)=u^2\bigl(u\csc^2u-\cot u\bigr),
 \qquad g'(x)=\frac{h(u)}{\pi^2},
\]
direct differentiation gives
\[
 h'(u)=\frac{2u}{\sin^2u}
 \bigl(2u-u^2\cot u-\sin u\cos u\bigr)>0.
\]
Indeed, \(\tan u>u\) gives \(u^2\cot u<u\), and
\(\sin u\cos u<u\), for \(0<u\le\pi/3\). Since
\(du/dx=-u^2/\pi<0\), we have \(g''(x)<0\).

The line \(A+Bx\) passes through \((5,g_5)\) and \((6,g_6)\).
Decreasing first differences place every integer point \((m,g_m)\)
below its extension, proving \eqref{eq:g-affine}. If \(b\ge a+2\),
decreasing differences give
\(g_{a+1}+g_{b-1}\ge g_a+g_b\). Repeated smoothing, together with
monotonicity, proves the balancing assertion.
\end{proof}

The only computer-assisted ingredient is the following finite lemma.
It concerns all geometrically possible side counts by
\eqref{eq:m-range}; no configuration is sampled.

\begin{lemma}[Finite coefficient certificate]\label{lem:finite-certificate}
For
\[
 \mathcal F_{29}=
 \{(m,k):3\le m\le29,\ 0\le k\le\min(4,m),\
                 k=4\Rightarrow m=4\},
\]
one has
\begin{equation}\label{eq:old-plane}
 q_k(m)\le A+Bm+Ck.
\end{equation}
Equality occurs only at \((5,0),(6,0),(4,4)\); every other slack exceeds
\(2.8804896149\times10^{-5}\).

Define
\begin{equation}\label{eq:five-plane-constants}
 h_1=\frac1{1+4\tan(3\pi/16)},\qquad h_2=2-\sqrt3,
\end{equation}
\[
 C_5=h_2-h_1,\qquad
 B_5=g_6-2h_1+h_2,\qquad
 A_5=g_6-6B_5.
\]
Then \(B_5>0\), and for \(0\le k\le3\),
\begin{equation}\label{eq:five-plane}
 q_k(m)\le A_5+B_5m+C_5k\qquad(3\le m\le29).
\end{equation}
The only equalities are \((6,0),(5,1),(5,2)\). If \(k\ge1\) and
\((m,k)\notin\{(5,1),(5,2)\}\), the slack is at least
\begin{equation}\label{eq:delta-star}
 \delta_*:=A_5+4B_5+3C_5-\frac14
 >0.00155222863598066.
\end{equation}
Moreover, with
\begin{equation}\label{eq:delta-zero}
 \delta_0:=A_5+5B_5-g_5
 >0.00133076507398426,
\end{equation}
one has
\begin{equation}\label{eq:unmarked-saving}
 A_5+B_5m-g_m\ge\delta_0(6-m)\qquad(3\le m\le29),
\end{equation}
and \(2\delta_0>\delta_*\). Finally,
\begin{equation}\label{eq:slope-intervals}
\begin{aligned}
 0.00797077920&<g_7-g_6<0.00797077921,\\
 0.01339875050&<g_6-g_5<0.01339875051,
\end{aligned}
\end{equation}
and
\begin{equation}\label{eq:final-intervals}
\begin{aligned}
 26A_5+149B_5+9C_5-\delta_*&<7.380565,\\
 26A+148B+10C&<7.368670.
\end{aligned}
\end{equation}
In addition,
\begin{equation}\label{eq:global-comparison-intervals}
 2g_5+23g_6<7.190081,\qquad
 7.38165296<U^2<7.38165297.
\end{equation}
\end{lemma}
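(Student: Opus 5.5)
The plan is to reduce every claim to a finite list of explicit comparisons between elementary trigonometric constants, and then certify each comparison with outward-rounded interval arithmetic. The set $\mathcal F_{29}$ is finite (at most $27\cdot5$ pairs), and each $q_k(m)$ is an explicit closed-form expression by \eqref{eq:ck-definition}. The same holds for $g_m$, $A,B,C$, $h_1,h_2$, and $A_5,B_5,C_5$. So \eqref{eq:old-plane}, \eqref{eq:five-plane} and \eqref{eq:unmarked-saving} are finitely many scalar inequalities. I will evaluate each side in interval arithmetic with directed rounding at a modest working precision, say 60 bits or more. For every strict inequality I check that the enclosure of the slack lies strictly above the claimed lower bound.

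The equality cases need separate, symbolic treatment, since interval arithmetic cannot prove an exact zero.

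\textbf{Equalities in \eqref{eq:old-plane}.} Here $q_0(5)=g_5$, $q_0(6)=g_6$ and $q_4(4)=1/4$. By construction $A+5B=g_5$, $A+6B=g_6$ and $A+4B+4C=1/4$. The third identity follows from $A+4B=2g_5-g_6$ together with $4C=1/4-2g_5+g_6$.

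\textbf{Equalities in \eqref{eq:five-plane}.} I check the identities $A_5+6B_5=g_6$, $A_5+5B_5+C_5=q_1(5)$ and $A_5+5B_5+2C_5=q_2(5)$, using $c_1(5)=1+4\tan(3\pi/16)$ and $c_2(5)=2+3\tan(\pi/6)=2+\sqrt3$.
\begin{itemize}
\item The second identity gives $q_1(5)=h_1$, since $A_5+5B_5+C_5=g_6-B_5+C_5=g_6-(g_6-2h_1+h_2)+(h_2-h_1)=h_1$.
\item The third follows the same way: $g_6-B_5+2C_5=2h_1-h_2+2h_2-2h_1=h_2$. Also $1/(2+\sqrt3)=2-\sqrt3=h_2$.
\end{itemize}
All remaining pairs are then certified to have slack bounded below by the stated constants. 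I obtain $\delta_*$ as the slack at $(4,3)$, where $q_3(4)=1/4$; I expect that to be the minimising marked pair. The enumeration confirms that every other $k\ge1$ pair has slack at least the rigorous lower enclosure of $\delta_*$.

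For \eqref{eq:unmarked-saving}, I first split the range. For $m\le6$ the finite check is direct; at $m=6$ both sides vanish. For $m\ge7$ the left side exceeds $B_5(m-6)-(g_m-g_6)$, so the claim holds for all large $m$ once $B_5+\delta_0>g_7-g_6$. Concavity of $g$ (Lemma~\ref{lem:g-concavity}) makes the increments $g_{m+1}-g_m$ decreasing, so this single comparison suffices. The enumeration up to $29$ covers it in any case.

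The remaining scalar claims are single interval evaluations of fixed constants:
\begin{itemize}
\item the signs $B_5>0$ and $2\delta_0>\delta_*$;
\item the slope enclosures \eqref{eq:slope-intervals};
\item the linear combinations in \eqref{eq:final-intervals} and \eqref{eq:global-comparison-intervals};
\item the enclosure of $U^2=2577/2500+11/\sqrt3$.
\end{itemize}
For each I report the computed enclosure and confirm it lies on the correct side of the stated rational bound.

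The main obstacle is the tightness of the claimed margins: $U^2$ is pinned down to about $10^{-8}$, and the smallest positive slack is about $3\times10^{-5}$. These margins only set the precision needed; the real work is to make the evaluation of $\tan$ and $\cot$ provably outward-rounded. I would use a validated library such as mpmath's interval context or Arb. Then I would record every interval so the certificate is reproducible, as the paper promises.
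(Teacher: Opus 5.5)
Your proposal is correct and is essentially the paper's proof: both run an exhaustive outward-rounded interval check over $3\le m\le 29$ and every feasible $k$, verify the six equalities algebraically, realize $\delta_*$ at $(4,3)$, and certify the remaining claims by single-constant enclosures. The only differences are minor. The paper uses a dependency-free exact-rational backend instead of Arb or mpmath. For $m\ge 7$ in \eqref{eq:unmarked-saving}, the paper simply notes that the right side is negative while the left side is nonnegative by \eqref{eq:five-plane}; your comparison $B_5+\delta_0>g_7-g_6$ also works, since $B_5+\delta_0=g_6-g_5$ exactly.

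Two small fixes are needed. First, the case $m=5$ of \eqref{eq:unmarked-saving} is an identity by the definition of $\delta_0$, so it must be handled symbolically like $m=6$. Second, to certify that a slack is at least $\delta_*$, compare it with the \emph{upper} endpoint of the $\delta_*$ enclosure, not the lower one. Alternatively, as the paper does, enclose the difference slack minus $\delta_*$ as a single interval.
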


\begin{proof}
This is an exhaustive finite interval calculation. The dependency-free
companion certificate
\texttt{circle26/certify\_upper\_bound.py} uses exact rational
interval endpoints. It encloses \(\pi\) by Machin's formula and
alternating arctangent series, encloses sine and cosine by alternating
Taylor bounds, and encloses square roots by integer square-root
arithmetic. It loops over the full geometric range
\(3\le m\le29\) and every feasible \(k\). For each nonidentity in
\eqref{eq:old-plane} and \eqref{eq:five-plane}, it forms the interval
for the right-minus-left slack and requires its lower endpoint to be
positive. The six stated equalities follow algebraically.

For marked five-plane types, the same loop subtracts \(\delta_*\);
the minimum is attained at \((4,3)\). For unmarked types it checks
\eqref{eq:unmarked-saving} for \(m=3,4,5\); the statement is an
identity for \(m=5,6\), and for \(m>6\) follows from
\eqref{eq:five-plane} because its right side is negative. The certified
enclosures are
\[
\begin{array}{c|c}
\text{quantity}&\text{outward enclosure}\\ \hline
\delta_*&[0.0015522286359806639,0.0015522286359806640]\\
\delta_0&[0.0013307650739842652,0.0013307650739842654]\\
2\delta_0-\delta_*&[0.0011093015119878664,0.0011093015119878670]\\
B_5&[0.01206798542659390,0.01206798542659392].
\end{array}
\]
The same inclusion-isotone exact-rational evaluation gives
\eqref{eq:slope-intervals}--\eqref{eq:global-comparison-intervals}.
Thus the lemma is a finite list of strict interval inequalities, with
no floating-point decision or sampling. A successful run reports
\(618\) completed checks.
\end{proof}

\subsection{A cell owning two corners}

Assume one cell \(P_*\) owns at least two corners of \(Q\), and let
\(r=r_*\). Lemma~\ref{lem:boundary-runs} gives \(T\ge3\), while
\(m_*\ge3\). Lemma~\ref{lem:euler-budget} therefore gives
\[
 \sum_{i\ne *}m_i\le(154-3)-3=148.
\]
By \eqref{eq:g-affine},
\begin{equation}\label{eq:multicorner-qsum}
 \sum_{i\ne *}g_{m_i}
 \le25A+148B=2g_5+23g_6=:K.
\end{equation}

If \(P_*\) owns two adjacent corners, convexity makes it contain the
unit side between them. If the disk centre is at distance \(d\ge r\)
from that side, the cell also contains the disk point at distance
\(d+r\ge2r\); the resulting triangle has area at least \(r\). If it
owns only two opposite corners, use their diagonal, of length
\(\sqrt2\), as a base and the endpoints of a disk diameter perpendicular
to it. If the diameter straddles the diagonal, the two triangles have
total area \(\sqrt2r\); if it lies on one side, the triangle using its
farther endpoint has area at least \(\sqrt2r\). Thus
\begin{equation}\label{eq:multicorner-area}
 |P_*|\ge r.
\end{equation}

Apply \eqref{eq:master-cauchy} to the other \(25\) cells. Equations
\eqref{eq:multicorner-qsum} and \eqref{eq:multicorner-area} give
\[
 S(\mathcal D)\le r+\sqrt{K(1-r)}.
\]
Here \(K>23g_6=23/(2\sqrt3)>4\). Writing
\(t=\sqrt{1-r}\in[0,1]\), we have
\[
 \sqrt K-\bigl(r+\sqrt{K(1-r)}\bigr)
 =(1-t)(\sqrt K-1-t)\ge0.
\]
Consequently
\begin{equation}\label{eq:multicorner-bound}
 S(\mathcal D)\le\sqrt{2g_5+23g_6}
 =2.681432614083219\ldots<U,
\end{equation}
where the strict comparison follows from
\eqref{eq:global-comparison-intervals}.

It remains to consider the case in which the four corners of \(Q\) have
four distinct owners. The endpoints of each square side then have
different owners, so every side contains a transition and \(T\ge4\).

\subsection{Exactly four transitions}

Suppose \(T=4\). There is exactly one transition on each side of \(Q\),
and the boundary runs are precisely the four corner cells. Fix such a
cell with \(m\) sides. Let \(\alpha,\gamma\) be its exterior angles at
its two transitions, and put \(s=\alpha+\gamma\). Its square corner has
exterior angle \(\pi/2\), so \(0\le s\le3\pi/2\).

Put \(y_j=\beta_j/2\), \(t_j=\tan y_j\),
\(D=\sum_jt_j\), and \(\Phi=1/D\), on the simplex
\(\sum_jy_j=\pi\). If \(\sum_jv_j=0\), differentiation in direction
\(v\) gives
\[
 D'=\sum_jt_j^2v_j,\qquad
 D''=2\sum_jt_j(1+t_j^2)v_j^2.
\]
Cauchy--Schwarz gives
\[
 (D')^2\le
 \left(\sum_jt_j(1+t_j^2)v_j^2\right)
 \left(\sum_j\frac{t_j^3}{1+t_j^2}\right)
 \le\frac{DD''}{2}.
\]
Therefore
\begin{equation}\label{eq:Phi-concave}
 \Phi''=\frac{2(D')^2-DD''}{D^3}\le0.
\end{equation}
This extends to the boundary by continuity, with \(\Phi=0\) when some
\(y_j=\pi/2\).

Let \(\mathcal Y_m(s)\) be the compact convex set of vectors
\(0\le y_j\le\pi/2\) with sum \(\pi\), first two coordinates summing
to \(s/2\), and square-corner coordinate at least \(\pi/4\). The last
condition relaxes its actual equality and can only weaken the upper
bound. Define
\[
 q_m(s)=\max_{y\in\mathcal Y_m(s)}\Phi(y),\qquad
 I_m=\{s:\mathcal Y_m(s)\ne\varnothing\}.
\]
Partial maximization preserves concavity: convex combinations of
maximizers at \(s,s'\) are feasible at the corresponding convex
combination of \(s,s'\), and \eqref{eq:Phi-concave} applies.

Jensen first equalizes the transition coordinates and then the remaining
unconstrained coordinates. It gives
\begin{equation}\label{eq:Fms}
q_m(s)=\frac1{F_m(s)},\quad
F_m(s)=
\begin{cases}
2\tan(s/4)+(m-2)
 \tan\!\left(\dfrac{2\pi-s}{2(m-2)}\right),
 &s\le(6-m)\pi/2,\\[2mm]
1+2\tan(s/4)+(m-3)
 \tan\!\left(\dfrac{3\pi/2-s}{2(m-3)}\right),
 &s>(6-m)\pi/2.
\end{cases}
\end{equation}
The first case is the unconstrained minimizer; in the second the corner
constraint is active. The formulas agree at the switch. For \(m=3\),
\(I_3=[\pi,3\pi/2]\), so singular values of the first expression for
\(s<\pi\) are never used. For \(m\ge4\),
\(I_m\subset[0,3\pi/2]\).

Define
\begin{equation}\label{eq:Hm}
 H_m=\max_{s\in I_m}\left(q_m(s)+\frac{s-\pi}{50}\right).
\end{equation}

\begin{lemma}[Four-transition scalar bounds]\label{lem:H-bounds}
\[
 H_3<0.230,\quad H_4<0.2524,\quad H_5<0.263,\quad
 H_m<0.273\quad(m\ge6).
\]
Consequently
\begin{equation}\label{eq:H-envelope}
 H_m\le0.2100+0.0106\min(m,6)\qquad(m\ge3).
\end{equation}
\end{lemma}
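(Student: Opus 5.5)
Since \(q_m\) is concave on \(I_m\) and \(s\mapsto(s-\pi)/50\) is affine, the objective in \eqref{eq:Hm} is concave. Each \(H_m\) is therefore the maximum of a concave one-variable function on a compact interval. My plan is to locate the maximizer through the sign of the one-sided derivative, certify the value by interval arithmetic, and then handle all large \(m\) at once by a uniform estimate. The derivative of \(q_m+(s-\pi)/50\) equals \(-F_m'(s)/F_m(s)^2+1/50\). On each branch of \eqref{eq:Fms}, \(F_m'\) is an explicit difference of \(\sec^2\) terms:
\[
 F_m'(s)=\tfrac12\sec^2(s/4)-\tfrac12\sec^2\!\Bigl(\tfrac{2\pi-s}{2(m-2)}\Bigr)
\]
on the first branch, and the analogous expression with \(m-3\) and \(3\pi/2-s\) on the second. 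By concavity, it suffices to exhibit a point \(s_0\) with the left derivative \(\ge0\) and the right derivative \(\le0\). Alternatively, one can bracket the maximizer in a short interval \([s_-,s_+]\) on which the derivative changes sign, and then bound the objective on that interval by its value at an endpoint plus the interval length times the derivative bound.

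For \(m=3\), \(I_3=[\pi,3\pi/2]\) and the corner branch is active. Here \(F_3(s)=1+2\tan(s/4)+\tan(3\pi/4-s/2)\) and the maximization is elementary; at \(s=\pi\) one has \(F_3=1+2+1=4\), giving \(0.25\)? This would contradict \(H_3<0.230\). So I would first recheck the domain: at \(s=\pi\), \(\beta=\pi/2\) for the corner, and the remaining two angles sum to \(3\pi/2\). Equalized, this gives \(y=3\pi/8\), i.e.\ \(\tan(3\pi/8)\approx2.414\), so \(F=1+2(2.414)=5.83\) and \(q\approx0.171\). Carefully applying the correct branch formula is thus part of the check. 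For \(m=4,5\) I would do the same computation with the branch switch at \(s=(6-m)\pi/2\). These three finite cases go into the interval certificate of Lemma~\ref{lem:finite-certificate}, each evaluated with outward-rounded enclosures of \(\tan\) and \(\sec^2\) at a few rational points.

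For \(m\ge6\) the switch point is \(\le0\), so only the second branch applies. I would bound
\[
 q_m(s)\le\frac1{1+2\tan(s/4)+(m-3)\tan\bigl(\tfrac{3\pi/2-s}{2(m-3)}\bigr)}
\]
using \((m-3)\tan(x/(m-3))\ge x\), which gives \(F_m(s)\ge1+2\tan(s/4)+(3\pi/2-s)/2=:F_\infty(s)\). This bound is independent of \(m\) and monotone in the right direction, so \(H_m\le\max_s\bigl(1/F_\infty(s)+(s-\pi)/50\bigr)\) for all \(m\ge6\). This single concave-type one-variable bound must come out \(<0.273\). If it is too crude, I would instead treat \(6\le m\le29\) individually by certificate; \(m\le29\) suffices by \eqref{eq:m-range}. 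The main obstacle is getting these margins certified: \(0.2524\) and \(0.273\) look tight, so the interval enclosures near the maximizer must be fine enough.

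Finally, \eqref{eq:H-envelope} is pure arithmetic. The right side equals \(0.2418,\,0.2524,\,0.2630,\,0.2736\) for \(m=3,4,5,\ge6\), and each of these is \(\ge\) the corresponding bound just proved.
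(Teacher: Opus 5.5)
\paragraph{The gap is the case $m=3$.} As written, this case is wrong, and it is the one case where your proposal never produces a bound. The expression $1+2\tan(s/4)+\tan(3\pi/4-s/2)$ is the second branch of \eqref{eq:Fms} for $m=4$, not $m=3$.

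For $m=3$ the switch point $(6-m)\pi/2=3\pi/2$ is the right endpoint of $I_3=[\pi,3\pi/2]$, so only the first branch is ever used; the second would contain the factor $m-3=0$. A triangle's only non-transition vertex is the corner, whose half-angle is forced to equal $\pi-s/2$. Hence $F_3(s)=2\tan(s/4)+\tan(\pi-s/2)$.

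This gives $q_3(\pi)=0$, not $1/4$. Your ``recheck'' value $0.171=3-2\sqrt2$ is actually $q_3(3\pi/2)$: a corner exterior angle of $\pi/2$ with transition angles summing to $3\pi/2$ means $s=3\pi/2$. More to the point, a single evaluation never bounds a maximum.

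The missing step is one line. The three half-angles sum to $\pi$, so Jensen gives $F_3\ge 3\tan(\pi/3)=3\sqrt3$. Together with $(s-\pi)/50\le\pi/100$, this yields $H_3\le 1/(3\sqrt3)+\pi/100<0.224$.

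\paragraph{The other cases are sound but only sketched.}

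For $m=5$, concavity plus a tangent-line bound is how the paper proceeds. It uses one tangent at a near-maximizer $s_5$ and multiplies the slope by the full length $3\pi/2$, rather than bracketing as you propose.

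For $m=4$, the paper's argument is analytic. It uses $q_4=\sin(s/2)/4$ for $s\le\pi$. For $s=\pi+\delta$ it substitutes $t=\tan(\delta/4)$ and uses $\delta/50\le 2t/25$, which reduces the claim to a cubic inequality on $[0,\sqrt2-1]$.

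For $m\ge6$, your bound $F_m\ge F_\infty$ is the paper's idea in sharper form. The paper additionally replaces $2\tan(s/4)$ by $s/2+s^3/96$, obtains the polynomial $1+3\pi/4+s^3/96$, and ends with a maximum below $0.27293$, a very thin margin.

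You need no fallback, and the fallback would only cover $m\le29$, not all $m\ge6$ as the lemma states. Since $F_\infty'(s)=\tfrac12\tan^2(s/4)$, the derivative of $1/F_\infty+(s-\pi)/50$ has the sign of $1+3\pi/4-s/2-3\tan(s/4)$. This is strictly decreasing and changes sign on $[0,3\pi/2]$, so the unique critical point $s_*$ is the maximizer. At $s_*$ the value is $\phi(u)=1/(5u)+(4\arctan u-\pi)/50$ with $u=\tan(s_*/4)$.

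This $\phi$ is decreasing, and $u>0.7$ because $2\arctan(0.7)+2.1<1+3\pi/4$. Hence $H_m\le\phi(0.7)<0.2718$ for every $m\ge6$, with a comfortable margin.
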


\begin{proof}
Jensen gives \(q_3(s)\le g_3=1/(3\sqrt3)\), and \(s\le3\pi/2\).
Thus \(H_3\le g_3+\pi/100<0.230\), using
\(\sqrt3>17/10\) and \(\pi<22/7\).

For \(m=4\) and \(s\le\pi\), \eqref{eq:Fms} simplifies to
\(q_4(s)=\sin(s/2)/4\), so the expression in \eqref{eq:Hm} is at most
\(1/4\). For \(s=\pi+\delta\), put \(t=\tan(\delta/4)\). Then
\(0\le t\le\sqrt2-1<0.42\), and
\[
 q_4=\frac{(t-1)(t^2-2t-1)}{4(t+1)},\qquad
 \frac\delta{50}=\frac2{25}\arctan t\le\frac{2t}{25}.
\]
After clearing the positive denominator, it suffices that
\[
 P(t)=625t^3-1675t^2+194t-6<0.
\]
On \(0\le t\le0.1\),
\(P(t)\le-1612.5t^2+194t-6<0\), since the quadratic has negative
discriminant. On \(0.1\le t\le0.42\), \(P'(t)<0\) and
\(P(0.1)=-2.725\). Hence \(H_4<0.2524\).

For \(m=5\), put \(h_5(s)=q_5(s)+(s-\pi)/50\). This function is
concave. At the exact point \(s_5=(927296/10^6)\pi\), the
exact-rational interval certificate encloses the global tangent bound
\[
 h_5(s_5)+\frac{3\pi}{2}|h_5'(s_5)|
 <0.26214661<0.263.
\]
Every point of \(I_5\) is within \(3\pi/2\) of \(s_5\), and a tangent
line to a concave function is a global upper bound. Hence \(H_5<0.263\).
This calculation is included in
\texttt{circle26/certify\_upper\_bound.py}.

Let \(m\ge6\). Only the second branch of \eqref{eq:Fms} is needed.
The inequalities \(\tan x\ge x+x^3/3\) and \(\tan x\ge x\) give
\begin{equation}\label{eq:D-lower}
 F_m(s)\ge D(s):=1+\frac{3\pi}{4}+\frac{s^3}{96}.
\end{equation}
Put \(p(s)=D(s)-5s/4\). On \(0\le s\le3\pi/2\),
\[
 p'(s)=\frac{s^2}{32}-\frac54<0.
\]
Also \(p(0)>0\), while
\(p(3\pi/2)=1-9\pi/8+9\pi^3/256<0\), using
\(3<\pi<22/7\). Hence \(p\) has one zero \(s_*\). The derivative of
\[
 \psi(s)=\frac1{D(s)}+\frac{s-\pi}{50}
\]
has the sign of \(p(s)\), so \(\psi\) is maximized at \(s_*\). For
\(s_0=23/8\), \(\pi>333/106\) gives
\[
 p(s_0)=\frac{3\pi}{4}-\frac{115321}{49152}>0,
\]
so \(s_*>s_0\). At the critical point,
\[
 \psi(s_*)=\frac4{5s_*}+\frac{s_*-\pi}{50}.
\]
Its derivative is \(1/50-4/(5s^2)\), which is negative for
\(0<s\le3\pi/2\). Therefore
\[
 H_m\le\psi(s_*)<
 \frac4{5(23/8)}+\frac{23/8-\pi}{50}<\frac{273}{1000},
\]
again using \(\pi>333/106\). The four bounds imply
\eqref{eq:H-envelope}. At \(m=3,4,5\), its right-hand side is,
respectively,
\[
 0.2418,\qquad 0.2524,\qquad 0.2630.
\]
For every \(m\ge6\), its right-hand side is \(0.2736\).
\end{proof}

Let
\[
 R=\sum_{\text{four corner cells }i}\min(m_i,6),
 \qquad12\le R\le24.
\]
At each transition, adjacent exterior angles sum to \(\pi\); hence the
four values \(s_i\) sum to \(4\pi\). By \eqref{eq:Hm} and
\eqref{eq:H-envelope}, the corner contribution is at most
\begin{equation}\label{eq:corner-contribution}
 \sum_{\text{corner cells }i}q_{m_i}(s_i)
 \le0.84+0.0106R.
\end{equation}

The other \(22\) cells have at most \(150-R\) sides.
Lemma~\ref{lem:g-concavity} balances them and gives
\begin{equation}\label{eq:ordinary-balanced}
O(R)=
\begin{cases}
(R+4)g_6+(18-R)g_7,&R\le18,\\
(R-18)g_5+(40-R)g_6,&R\ge18.
\end{cases}
\end{equation}
For \(R\le18\), this is \(R+4\) hexagons and \(18-R\) heptagons; for
\(R\ge18\), it is \(R-18\) pentagons and \(40-R\) hexagons.
By \eqref{eq:slope-intervals}, the sum of
\eqref{eq:corner-contribution} and \eqref{eq:ordinary-balanced}
increases up to \(R=18\) and decreases afterwards. Thus
\begin{align}
 S(\mathcal D)^2
 &\le0.84+0.0106(18)+22g_6\notag\\
 &=1.0308+\frac{11}{\sqrt3}
 =\frac{2577}{2500}+\frac{11}{\sqrt3}=U^2.
\label{eq:T4-bound}
\end{align}

\subsection{Exactly five transitions}

Suppose \(T=5\). Mark the four square-corner angles and, at each
transition, choose one of the two incident exterior angles that is at
least \(\pi/2\). There are exactly nine marks. By
Lemma~\ref{lem:boundary-runs}, they are distributed among at most five
boundary cells.

\begin{lemma}\label{lem:T5-no-four-marks}
No cell has four marks when \(T=5\).
\end{lemma}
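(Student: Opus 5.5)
The plan is to show that four marks force the cell to be a rectangle, and then to rule out every way the four vertices of such a rectangle can sit on \(\partial Q\). The setting is the one in force at this point: \(T=5\), and the four corners of \(Q\) have four distinct owners, since the case of a cell owning two corners was already disposed of in \eqref{eq:multicorner-bound}.

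First, the rectangle. Suppose a cell \(P\) carries four marks. Each marked exterior angle is at least \(\pi/2\), and all exterior angles of \(P\) sum to \(2\pi\). So, exactly as in the discussion preceding Lemma~\ref{lem:marked-cost} (the case \(k=4\)), the four marked angles equal \(\pi/2\) and every other exterior angle vanishes. After suppressing collinear vertices, \(P\) is a rectangle whose four vertices are precisely the marked vertices. Every mark sits either at a corner of \(Q\) or at a transition point on an open side of \(Q\). Hence all four vertices of \(P\) lie on \(\partial Q\). Let \(c\) be the number of them that are corners of \(Q\).

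Next, the case analysis on \(c\).
\begin{enumerate}
\item If \(c\ge2\), then \(P\) owns at least two corners. This contradicts the standing assumption that the corners have distinct owners.
\item If \(c=1\), the two edges of \(P\) at that corner meet at a right angle, so they run along the two sides of \(Q\) through the corner. The two adjacent vertices of \(P\) are then points on those two sides, and they must be open-side transitions. The fourth vertex completes the rectangle, so it lies on \(\partial Q\) only if it is the opposite corner of \(Q\). That possibility is excluded by \(c\le1\); otherwise the vertex is interior to \(Q\), contradicting the fact that all four vertices lie on \(\partial Q\).
\item If \(c=0\), all four vertices are transitions and lie on \(\partial Q\). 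The edges joining them are chords of \(Q\): a chord lies in the interior of \(Q\) unless it runs along a side, and a side-edge would need both endpoints on one side together with right angles at both. So the convex rectangle \(P\) meets \(\partial Q\) only in its four vertices and owns no boundary run of positive length. But a transition point is, by definition, an endpoint of runs owned by the two incident cells. Lemma~\ref{lem:boundary-runs} and the genericity condition that no run has zero length therefore imply that a cell incident with a transition owns a run of positive length ending there. This is a contradiction.
\end{enumerate}

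The main obstacle I expect is the \(c=0\) step, and in particular justifying that an edge of \(P\) cannot lie along a side of \(Q\). If an edge did lie along a side, \(P\) would own a run whose endpoints are transitions on that side, with right angles there. The adjacent edges would then be perpendicular to the side, and the opposite edge would have to reach a different side of \(Q\), namely the opposite one. That makes \(P\) a full-width strip touching two opposite sides. In that situation I would count transitions: the four corners have distinct owners, so each side carries a transition. The two sides the strip crosses carry its own four transitions, and every remaining corner cell needs its own transitions. I would show this forces \(T\ge6\), contradicting \(T=5\). I would try this count first, since it is the delicate point of the argument.
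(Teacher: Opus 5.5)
Your proof is correct, but it takes a more coordinate-based route than the paper's.

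The paper uses nothing about the cell beyond \(m=4\) and the run structure of Lemma~\ref{lem:boundary-runs}. If the cell owns a corner, its three marked transitions plus the parity statement give at least four transition vertices, hence five vertices in all. If it owns none, its four transition vertices force it to own at least two runs. The four distinct corner owners need four further runs, so there are at least six runs against \(T=5\).

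You instead exploit the rectangle shape and split on the number \(c\) of corner vertices.
\begin{itemize}
\item For \(c=1\) the fourth vertex lands in the open square. Your phrase ``only if it is the opposite corner'' should simply say this: the two adjacent vertices lie on open sides, so the fourth vertex is interior.
\item For \(c=0\) you split on whether an edge lies along \(\partial Q\). The count you left as a plan does close. The strip \([a,b]\times[0,1]\) has four transitions on two opposite open sides. Each of the other two sides carries a further transition, because its end corners have distinct owners. Hence \(T\ge6\).
\end{itemize}

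Your no-side-edge subcase is in fact vacuous from the start. At a simple transition, each incident cell has an edge along the side of \(Q\) ending there. This is why the paper can dispatch the whole cornerless case with a single run count, which is your strip count applied without first locating the cell.

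Your route gives an explicit picture of the only dangerous configuration, a full-height strip. The paper's route is shorter and needs no right-angle geometry.
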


\begin{proof}
A cell with four marks has \(m=4\), by the angle sum following
\eqref{eq:ck-definition}. If it owns a square corner, then, because
the four corners have distinct owners, it owns exactly one corner and
its other three marks are transition vertices. Its total number of
incident transition endpoints is even by Lemma~\ref{lem:boundary-runs},
and hence is at least four. Together with the corner, these give at
least five distinct polygon vertices, contradicting \(m=4\).

If the cell owns no corner, its four marks are transition endpoints and
require at least two boundary runs. Each of the four distinct corner
owners requires a separate run, since a run cannot contain corners
with different owners. This requires at least six runs, whereas \(T=5\)
gives exactly five.
\end{proof}

Thus every cell has at most three marks, so
Lemma~\ref{lem:finite-certificate} applies. Since \(M=149\), \(B_5>0\),
and the total mark count is nine,
\begin{equation}\label{eq:V5}
 \sum_iq_{k_i}(m_i)
 \le V_5:=26A_5+149B_5+9C_5.
\end{equation}
If a marked cell is not one of the tight types \((5,1),(5,2)\),
\eqref{eq:delta-star} saves at least \(\delta_*\) from \(V_5\).

Suppose every marked cell is tight. At most five cells carry all nine
marks, and a tight marked cell carries at most two. Hence there are
exactly five marked pentagons: four carry two marks and one carries one.
They use \(25\) sides, leaving at most \(124\) sides for \(21\) unmarked
cells. Summing \eqref{eq:unmarked-saving} over those cells saves at least
\[
 \delta_0\left(6\cdot21-
       \sum_{\text{unmarked }i}m_i\right)
 \ge2\delta_0>\delta_*.
\]
In either case, \eqref{eq:master-cauchy}, \eqref{eq:V5}, and
\eqref{eq:final-intervals} give
\begin{equation}\label{eq:T5-bound}
 S(\mathcal D)^2\le V_5-\delta_*
 <7.380565<U^2.
\end{equation}

\subsection{Six or more transitions}

Suppose \(T\ge6\). Mark the four corner angles and one angle at every
transition. The number of marks is \(4+T\). No cell can have more than
four marked angles, and four marks force \((m,k)=(4,4)\). Hence
\eqref{eq:old-plane} applies cell by cell. Since \(M=154-T\),
\begin{equation}\label{eq:Tge6-affine}
 S(\mathcal D)^2\le26A+(154-T)B+(4+T)C.
\end{equation}
Here \(B>0\) by Lemma~\ref{lem:g-concavity}. Also \(C<0\): compare the
strict instance of \eqref{eq:old-plane} at \((4,0)\) with its equality
at \((4,4)\). Thus the right side decreases with \(T\). At \(T=6\),
\eqref{eq:final-intervals} gives
\begin{equation}\label{eq:Tge6-bound}
 S(\mathcal D)^2\le26A+148B+10C
 <7.368670<U^2.
\end{equation}
Equations \eqref{eq:multicorner-bound}, \eqref{eq:T4-bound},
\eqref{eq:T5-bound}, and \eqref{eq:Tge6-bound} prove the theorem for
generic diagrams.

\subsection{Removal of genericity}\label{subsec:remove-genericity}

\begin{lemma}[Density of generic feasible packings]\label{lem:generic-density}
Every packing of finitely many positive-radius disks is the limit of
strictly feasible packings whose clipped power diagrams satisfy the four
genericity conditions above.
\end{lemma}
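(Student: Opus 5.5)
Given a packing $(z_i,r_i)_{i=1}^n$, we first shrink the disks. Replace each disk by $B(z_i',\lambda r_i)$ with $z_i'=\tfrac12(1,1)+\lambda(z_i-\tfrac12(1,1))$ and $\lambda<1$ close to $1$. This scaling about the centre of $Q$ keeps every disk inside $Q$ and creates strict slack: each disk lies in the interior of $Q$, and distinct disks have $|z_i'-z_j'|>\lambda(r_i+r_j)$ whenever the originals were tangent or separated. Indeed $|z_i'-z_j'|=\lambda|z_i-z_j|\ge\lambda(r_i+r_j)$, so we then also shrink the radii slightly, by a factor $\mu<1$, to make every inequality strict. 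Strict feasibility is an open condition in $(z',r')\in\mathbb R^{3n}$, so a whole neighbourhood $N$ of the shrunk packing consists of strictly feasible packings. As $\lambda,\mu\to1$ the shrunk packings converge to the original one. It therefore suffices to show that generic configurations are dense in every such open set $N$.

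Within $N$, we show that each failure of genericity is confined to a closed nowhere-dense set, in fact a finite union of proper real-algebraic subsets, of parameter space. We encode weights as $w_i=r_i^2$; the map $(z,r)\mapsto(z,w)$ is a local diffeomorphism for $r>0$.

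Condition (1) fails only if four centres have a common point of equal power. This is a polynomial condition: the $3\times3$ system of affine equations $\pi_i=\pi_j=\pi_k=\pi_l$ is consistent. Its determinant vanishes nowhere identically, as perturbing one weight shows, so the failure set is proper.

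Condition (2) fails only if three cells have equal power at a point of a side line $\{x=0\}$, $\{x=1\}$, $\{y=0\}$ or $\{y=1\}$. It also fails if the bisector line $\pi_i=\pi_j$ coincides with a side line, or is parallel to it and tangent to it. Each of these is again an algebraic equation in the parameters, and a weight perturbation moves the relevant point.

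Condition (3) fails only if a bisector passes through a corner of $Q$, $\pi_i(c)=\pi_j(c)$ for some corner $c$, which is one nontrivial equation. Zero-length runs are already covered by the triple-point equalities. Condition (4) is not a restriction at all, since it is a normalization: we simply suppress redundant edges and collinear vertices.

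A finite union of proper algebraic subsets has dense complement, so generic strictly feasible packings are dense in $N$. The disk containment $D_i\subset P_i$ and nonemptiness of all cells come from (\ref{eq:disk-in-power-cell}) and need no genericity.

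The main obstacle is checking that each defining polynomial is not identically zero on the connected open set $N$. This must hold for the specific index tuple, not just somewhere in $\mathbb R^{3n}$. We would handle it by noting that varying a single weight $w_l$ shifts the affine function $\pi_i-\pi_l$ by a constant while fixing all other bisectors. For instance, a triple point $\{\pi_i=\pi_j=\pi_k\}$ on a side line is destroyed by changing $w_k$, because the triple point moves along the fixed line $\pi_i=\pi_j$, which is transverse to the side. If that line is itself parallel to the side, we change $w_j$ instead.

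Since polynomials vanishing on a nonempty open set vanish identically, a single perturbation witnessing nonvanishing suffices. Small perturbations stay in $N$, so we end with generic strictly feasible packings arbitrarily close to the original packing, as the lemma asserts.
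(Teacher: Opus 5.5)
Your proposal is correct and follows essentially the same route as the paper. You shrink to strict feasibility and pass to weight coordinates via a local diffeomorphism; the paper uses the equivalent affine coefficients $(-2z_i,|z_i|^2-r_i^2)$. You then contain every failure of genericity in a finite union of proper algebraic sets, each shown nonzero by varying a single weight or intercept, and nonemptiness of the cells comes from $D_i\subset P_i$.

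The differences are cosmetic. Your homothety about the centre of $Q$ is unnecessary, since shrinking the radii with centres fixed already leaves wall slack $\varepsilon r_i$. You also drop the paper's ``comparison lines coincide'' case, which is harmless: for full-dimensional power cells it produces no failure of the stated conditions beyond the ties you already list.
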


\begin{proof}
Shrink every radius by a common factor \(1-\varepsilon\), keeping the
centres fixed. For every pair \(i\ne j\), the new pairwise slack is at
least \(\varepsilon(r_i+r_j)>0\), and the new wall slack is at least
\(\varepsilon r_i>0\). Strict feasibility is therefore open near the
shrunken parameter vector.

After removing the common term \(|x|^2\), the power diagram is the lower
envelope of
\[
 \ell_i(x)=-2\langle z_i,x\rangle+|z_i|^2-r_i^2.
\]
For \(r_i>0\), the map
\[
 (z_{i,1},z_{i,2},r_i)\longmapsto
 (-2z_{i,1},-2z_{i,2},|z_i|^2-r_i^2)
\]
has Jacobian determinant \(-8r_i\ne0\). It is a local diffeomorphism,
so arbitrary small perturbations of the affine coefficients arise from
arbitrary small perturbations of centres and positive radii.

For finitely many affine functions clipped by four fixed lines, a
nongeneric lower envelope satisfies one of finitely many determinant
equalities: four affine functions agree at one point; three agree at a
point of a prescribed wall; two agree at a square corner; a
pair-equality line coincides with a wall; or comparison lines coincide.
These conditions also contain every collapsed edge or boundary run:
collapsing an interior edge produces a four-function equality, while
collapsing a boundary edge produces a three-function wall equality or
a corner tie. Each determinant is a nonzero polynomial in the affine
coefficients (varying one intercept shows it is not identically zero).
A finite union of proper real algebraic zero sets has empty interior.
Thus generic coefficient vectors are dense. Pull one back through the
local diffeomorphism, choosing it within the open strict-feasibility
neighbourhood. Equation~\eqref{eq:disk-in-power-cell} ensures every
perturbed cell remains full-dimensional and nonempty.
\end{proof}

\begin{proof}[Proof of Theorem~\ref{thm:global-upper-bound}]
Let \(\mathcal D\) be arbitrary. Choose
\(\varepsilon_\nu\downarrow0\), shrink its radii by
\(1-\varepsilon_\nu\), and use Lemma~\ref{lem:generic-density} to make
a further generic perturbation small enough to preserve strict
feasibility and converge to \(\mathcal D\). The generic result gives
\(S(\mathcal D_\nu)\le U\) for every \(\nu\). Continuity of the sum of
radii yields \(S(\mathcal D)\le U\) in the limit.
\end{proof}

\section{The finite certificate}\label{sec:certificate}

The proof separates geometric statements from finite scalar inequalities.
The geometry and topology are proved in Section~\ref{sec:upper-proof}; the
accompanying program \texttt{certify\_upper\_bound.py} checks the following
finite claims:
\begin{enumerate}
  \item every affine reciprocal-angle inequality for all feasible pairs
  $(m,k)$, using the geometric bound $3\le m\le29$;
  \item monotonicity and discrete concavity of the finite sequence $g_m$
  on the required range, together with the two slope comparisons that select
  $R=18$ in the four-transition branch;
  \item the tangent enclosures for $H_4$ and $H_5$, and the rational
  checkpoints in the analytic $m\ge6$ estimate;
  \item the sign conditions, slack alternatives, and final totals in the
  multi-corner, five-transition, and at-least-six-transition branches.
\end{enumerate}

The exact lower certificate has no third-party dependencies. The upper
certificate records every outward enclosure used in the manuscript and exits
with a nonzero status if any comparison is undecided or false. From the
project directory, the complete audit is run by
\begin{verbatim}
python3 verify_exact.py
python3 certify_upper_bound.py
\end{verbatim}
The expected output and cryptographic hashes of the checked inputs are
recorded in Appendix~\ref{app:certificate-output}. The program supports the
proof but does not replace its combinatorial or geometric lemmas.

\section{Concluding remarks}\label{sec:conclusion}

The power-cell relaxation improves the elementary area bound substantially,
but it still leaves a gap of
\[
  2.716919756099889566\ldots
  -2.635983084917607783\ldots
  =0.080936671182281783\ldots .
\]
Closing this gap will require information not retained by the present
relaxation, such as compatibility between neighboring cell angles, metric
constraints on the power diagram, or a certified exclusion of alternative
contact topologies. The side-count and boundary-run arguments are not
specific to 26 disks; obtaining a useful uniform theorem for general $n$ is a
natural next question.

\appendix
\section{Certificate manifest}\label{app:certificate-output}

This appendix is generated after the final certificate run. It gives the
software version, SHA-256 hashes, and outward-rounded numerical intervals used
in the proof, so that the computational component can be reproduced without
relying on values copied from terminal output.

\begingroup
\small
\setlength{\tabcolsep}{5pt}
\renewcommand\UrlFont{\rmfamily\scriptsize}
\begin{longtable}{@{}p{0.30\textwidth}p{0.64\textwidth}@{}}
\toprule
Item & Certified value \\
\midrule
\endfirsthead
\toprule
Item & Certified value \\
\midrule
\endhead
Upper-certificate backend & \nolinkurl{exact fractions; Machin/arctan + Taylor trig + integer sqrt} \\
Completed upper checks & \nolinkurl{618} \\
Certified side domain & \nolinkurl{3 <= m <= 29} \\
Certificate SHA-256 & \nolinkurl{78a35735d5c4991e063ed8ae4c0b38bbd952fd56f9cf69d89474634de7ea9e98} \\
Coordinate SHA-256 & \nolinkurl{b966c6076f65598bdab5cf37680514329201b6a3e729d23b9cc0926a3c88fd44} \\
Exact verifier SHA-256 & \nolinkurl{e0ddc4f5b2b068f74825f86758a9778ff4880ceab1e9f88a945c02c1f755680c} \\
Exact sum of radii & {\raggedright\rmfamily\scriptsize 26359830849176077831865694854434817303966767\hspace{0pt}9824847/100000000000000000000000000000000000\hspace{0pt}000000000000000\par} \\
Minimum wall slack & \nolinkurl{1/1000000000000000000000000000000000000000000000000} \\
Minimum squared pair slack & {\raggedright\rmfamily\scriptsize 32294805931560370790816792913893725448064845\hspace{0pt}19247907/50000000000000000000000000000000000\hspace{0pt}00000000000000000000000000000000000000000000\hspace{0pt}000000000000000000000\par} \\
$\pi$ enclosure & \nolinkurl{[3.1415926535897932384626433832, 3.1415926535897932384626433833]} \\
$\sqrt3$ enclosure & \nolinkurl{[1.7320508075688772935274463415, 1.7320508075688772935274463416]} \\
$H_4$ tangent enclosure & \nolinkurl{[0.2522980788928591684028487608, 0.2522980788928591684028487609]} \\
$H_5$ tangent enclosure & \nolinkurl{[0.2621466099288132647684526720, 0.2621466099288132647684526721]} \\
$m\geq6$ analytic cap & \nolinkurl{[0.2729290164934215265350949584, 0.2729290164934215265350949585]} \\
Old-plane (5,1) slack & \nolinkurl{[0.0000288048961497346838133505, 0.0000288048961497346838133506]} \\
Marked gap delta-star & \nolinkurl{[0.0015522286359806639185304734, 0.0015522286359806639185304735]} \\
Unmarked gap delta-zero & \nolinkurl{[0.0013307650739842652871068354, 0.0013307650739842652871068355]} \\
Gap difference: twice delta-zero minus delta-star & \nolinkurl{[0.0011093015119878666556831975, 0.0011093015119878666556831976]} \\
Five-plane coefficient B5 & \nolinkurl{[0.0120679854265939093260256383, 0.0120679854265939093260256384]} \\
g7 - g6 enclosure & \nolinkurl{[0.0079707792012351987687883042, 0.0079707792012351987687883043]} \\
g6 - g5 enclosure & \nolinkurl{[0.0133987505005781746131324738, 0.0133987505005781746131324739]} \\
Multi-corner $S^2$ & \nolinkurl{[7.1900808638691657071380948085, 7.1900808638691657071380948086]} \\
Four-transition $S^2$ & \nolinkurl{[7.3816529610858834096006365855, 7.3816529610858834096006365856]} \\
Five-transition $S^2$ & \nolinkurl{[7.3805645675260637103662462842, 7.3805645675260637103662462843]} \\
Six-transition $S^2$ & \nolinkurl{[7.3686694114763682091431007492, 7.3686694114763682091431007493]} \\
Global $S$ upper enclosure & \nolinkurl{[2.7169197560998895665563886937, 2.7169197560998895665563886938]} \\
\bottomrule
\end{longtable}
\endgroup

\end{document}